\documentclass[11pt,a4paper]{amsart}

 \usepackage[utf8x]{inputenc}
\usepackage{latexsym}
\usepackage{color,graphicx,shortvrb}
\usepackage{amsmath, amssymb}
\usepackage{amsfonts}
\usepackage[colorlinks, bookmarks=true]{hyperref}

\newtheorem{theorem}{Theorem}[section]
\newtheorem{lemma}[theorem]{Lemma}
\newtheorem{proposition}[theorem]{Proposition}
\newtheorem{corollary}[theorem]{Corollary}

\theoremstyle{definition}



\setlength{\textwidth}{16cm}       
\setlength{\oddsidemargin}{0.25cm}   
\setlength{\evensidemargin}{0.25cm}  
\setlength{\topmargin}{1.2cm}     

\author{J. M. Almira}
\title{On Popoviciu-Ionescu functional equation in one and several variables}

\begin{document}
\keywords{Functional equations, Generalized functions, Exponential polynomials, Montel type theorem}


\subjclass[2010]{39B22, 39A70, 39B52}



\begin{abstract}
We study a functional equation first proposed by  T. Popoviciu \cite{P} in 1955. In the one-dimensional case, it was solved for the easiest case by Ionescu \cite{I} in 1956  and, for the general case, by  Ghiorcoiasiu and Roscau \cite{GR}  and Rad\'{o} \cite{R} in 1962. We present a solution to the equation both for the one-dimensional and the higher-dimensional cases, which is based on a generalization of Rad\'{o}'s theorem to distributions in a higher dimensional setting. For functions of a single variable, our solution is different from the existing ones and, for functions of several variables, our solution is, as far as we know, the first one that exists. The one-dimensional part already appeared in a previous paper by the author \cite{Aviejo}
\end{abstract}

\maketitle

\markboth{J. M. Almira}{Popoviciu-Ionescu functional equation}

\section{Motivation}
We consider the continuous solutions of the functional equation
\begin{equation}\label{Po}
\det\left[\begin{array}{cccccc}
f(x) & f(x+h) &  \cdots & f(x+nh) \\
f(x+h) & f(x+2h) & \cdots & f(x+(n+1)h)\\
\vdots & \vdots & \ \ddots & \vdots \\
f(x+nh) & f(x+(n+1)h) &  \cdots &  f(x+2nh)\\
\end{array} \right] =0 \text{ for all } x,h\in\mathbb{R}.
\end{equation}
This equation was proposed by T. Popoviciu \cite{P} for real functions of one real variable and was studied by several Romanian and Hungarian  mathematicians in the 1960's \cite{C,I,GR,R,S}. In particular, Iounescu \cite{I} solved it for continuous functions with $n=1,2$ and, later on, as a result of the joint efforts of Ghiorcoiasiu and Roscau \cite{GR} and Rad\'{o} \cite{R}, it was solved for arbitrary $n$. Concretely, Ghiorcoiasiu and Roscau proved that, if $f:\mathbb{R}\to\mathbb{R}$ is a continuous solution of \eqref{Po}, then there exist $H>0$ and continuous  functions $a_k:(0,H)\to \mathbb{R}$, $k=0,\cdots,n$, such that $(a_0(h),\cdots,a_n(h))\neq (0,\cdots,0)$ for some $h\in (0,H)$ and
\begin{equation}\label{rado}
a_0(h)f(x)+a_1(h)f(x+h)+\cdots +a_n(h)f(x+nh)=0 \text{ for all } x\in\mathbb{R} \text{ and all } 0\leq h<H. 
\end{equation}
and Rad\'{o} proved that, for continuous functions $f:\mathbb{R}\to\mathbb{R}$, the equation \eqref{rado} characterizes the exponential polynomials which solve an ordinary homogenous linear differential equation of order $n$, with constant coefficients, $A_0f+A_1f'+\cdots+A_nf^{(n)}=0$. 
%

For functional equations like \eqref{rado}, which can be viewed as depending on a parameter $h$, it makes sense to ask about the minimal sets of parameters  
$\{h_i\}_{i\in I}$ with the property that, if $f$ solves the equation with $h=h_i$ for all $i\in I$, then it solves the equation for all $h$. These kind of results are named Montel-type theorems after the seminal papers by the French mathematician Montel, who studied the problem for Fr\'{e}chet's functional equation $\Delta_h^{n}f=0$ \cite{M1,M2,M3} (see also \cite{A1,A2,A3,A4}). 
 
We use, in this paper, Anselone-Korevaar's theorem \cite{anselone} to demonstrate a Montel-type theorem connected to Rad\'{o}'s functional equation \eqref{rado}, which we re-formulate for distributions defined on $\mathbb{R}^d$, and use the corresponding result to give a new proof of the fact that continuous solutions $f:\mathbb{R}^d\to\mathbb{R}$ of \eqref{Po} are exponential polynomials.  


\section{A characterization of exponential polynomials and Rad\'{o}'s theorem for higher dimensions}

\begin{theorem} \label{main} Let $h_1, h_2,\dots,h_s$  be such that they span a dense additive subgroup
of $\mathbb{R}^d$. Let $f$ be a distribution on $\mathbb{R}^d$ such that there exist natural
numbers $n_i$, $i=1,\dots,s$  satisfying
\begin{equation}\label{uno}
  \dim \mathbf{span} \{f, \tau_{h_i}(f),\cdots, (\tau_{h_i})^{n_i}(f)\} \leq n_i, \  \  i=1,\cdots,s.
\end{equation}
Then $f$ is, in distributional sense, a continuous exponential polynomial. In particular, $f$ is an ordinary function which is equal almost everywhere, in the Lebesgue measure, to an exponential polynomial.
\end{theorem}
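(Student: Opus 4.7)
The plan is to reduce the hypotheses to the setting of Anselone--Korevaar's theorem, i.e., to show that $\mathbf{span}\{\tau_h f:h\in\mathbb{R}^d\}$ is a finite-dimensional subspace of $\mathcal{D}'(\mathbb{R}^d)$, and then invoke that result to conclude $f$ is an exponential polynomial.

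First, I would turn each hypothesis \eqref{uno} into a genuine linear recurrence. The $n_i+1$ distributions $f,\tau_{h_i}f,\dots,\tau_{h_i}^{n_i}f$ lie in a space of dimension at most $n_i$, so they are linearly dependent. Picking the minimal $m_i\le n_i$ such that $f,\tau_{h_i}f,\dots,\tau_{h_i}^{m_i}f$ are dependent yields constants $c_0^{(i)},\dots,c_{m_i}^{(i)}$, not all zero, with
\[
\sum_{k=0}^{m_i} c_k^{(i)}\,\tau_{h_i}^k f \;=\; 0.
\]
The minimality of $m_i$, combined with the fact that $\tau_{h_i}$ is a bijection on $\mathcal{D}'(\mathbb{R}^d)$, forces $c_0^{(i)}\neq 0$ and $c_{m_i}^{(i)}\neq 0$. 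Because translations commute, the very same identity holds when $f$ is replaced by any translate $\tau_g f$.

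Next, I would produce a finite-dimensional subspace invariant under the subgroup $H$ generated by $h_1,\dots,h_s$. Set
\[
U := \mathbf{span}\bigl\{\tau_{h_1}^{k_1}\cdots\tau_{h_s}^{k_s} f:\;0\le k_i<m_i,\; i=1,\dots,s\bigr\},
\]
which has dimension at most $m_1\cdots m_s$. Applying the $h_i$-relation to each generator shows $\tau_{h_i}(U)\subseteq U$ for every $i$; since $U$ is finite-dimensional and $\tau_{h_i}$ is globally injective, $\tau_{h_i}|_U$ is bijective, so $\tau_{-h_i}$ also preserves $U$. Thus $U$ is $H$-invariant and contains $\tau_h f$ for every $h\in H$. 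A finite-dimensional subspace of $\mathcal{D}'(\mathbb{R}^d)$ is closed, and $h\mapsto \tau_h f$ is continuous in the distribution topology; since $H$ is dense in $\mathbb{R}^d$, approximating any $h\in\mathbb{R}^d$ by a sequence in $H$ and passing to the limit shows $\tau_h f\in U$. Hence $\mathbf{span}\{\tau_h f:h\in\mathbb{R}^d\}\subseteq U$ is finite-dimensional, and Anselone--Korevaar's theorem identifies $f$ with a (smooth) exponential polynomial, yielding both conclusions of the statement.

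The main obstacle will be the invariance step $\tau_{h_i}(U)\subseteq U$: to handle a generator of the form $\tau_{h_1}^{k_1}\cdots\tau_{h_i}^{m_i-1}\cdots\tau_{h_s}^{k_s} f$ one must invoke the $h_i$-relation applied to the translate $\tau_{h_1}^{k_1}\cdots\tau_{h_{i-1}}^{k_{i-1}}\tau_{h_{i+1}}^{k_{i+1}}\cdots\tau_{h_s}^{k_s} f$, which requires keeping track of commuting translations and the multi-indices. The rest---the minimality argument, the finite-dimensional linear algebra that upgrades $\tau_{h_i}$-invariance to $\tau_{-h_i}$-invariance, and the density/closedness argument in $\mathcal{D}'(\mathbb{R}^d)$---should proceed routinely.
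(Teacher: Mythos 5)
Your proof is correct and follows essentially the same route as the paper: both extract a linear recurrence in each direction $h_i$, show the finite-dimensional space spanned by the mixed translates $\tau_{h_1}^{k_1}\cdots\tau_{h_s}^{k_s}f$ (with bounded exponents) is invariant under $\tau_{\pm h_i}$ via injectivity of translation on a finite-dimensional space, use density of the generated subgroup together with closedness of finite-dimensional subspaces and continuity of $h\mapsto\tau_h f$, and conclude with Anselone--Korevaar's theorem. The only differences are cosmetic (a single minimal recurrence with nonzero extreme coefficients versus the paper's automorphism argument on $W_i$, and your more explicit density/closedness step), plus the trivially handled case $f=0$ that the paper treats separately.
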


\begin{proof} If $f=0$ we are done. Thus, we impose $f\neq 0$.  Let us assume, without loss of generality,  that $n_i$ is the smallest natural number satisfying
\eqref{uno}, and let 
$$W_i=\mathbf{span} \{f, \tau_{h_i}(f),\cdots, (\tau_{h_i})^{n_i-1}(f)\}.$$
Obviously, $\dim(W_i)=n_i$ (otherwise $n_i$ would not be minimal). Furthermore,
\eqref{uno}  implies that $\tau_{h_i}(W_i)\subseteq W_i$. Hence $\tau_{h_i}$ defines an automorphism on $W_i$, since $\tau_{h_i}$ is always injective and $\dim W_i<\infty$.   Consequently, $(\tau_{h_i})^m(W_i)= W_i $ for all integral numbers $m$. In  particular, for each $m\in\mathbb{Z}$ there exist numbers $\{a_{i,m,k}\}_{k=0}^{n_i-1}$ such that
\[
\tau_{h_i}^m(f)=\sum_{k=0}^{n_i-1}a_{i,m,k}(\tau_{h_i})^k(f).
\]
It follows that 
\begin{eqnarray*}
&\ & \tau_{m_1h_1+\cdots+m_sh_s}(f) = (\tau_{h_1})^{m_1}(\tau_{h_2})^{m_2}\cdots (\tau_{h_s})^{m_s}(f)\\
&\ & \  \  = (\tau_{h_1})^{m_1}\cdots(\tau_{h_{s-1}})^{m_{s-1}}\left (\sum_{k_s=0}^{n_s-1}a_{s,m_s,k_s}(\tau_{h_s})^{k_s}(f)\right)\\
&\ & \  \  = \sum_{k_s=0}^{n_s-1} a_{s,m_s,k_s} (\tau_{h_1})^{m_1}\cdots(\tau_{h_{s-1}})^{m_{s-1}} (\tau_{h_s})^{k_s}(f)\\
\end{eqnarray*}
and, repeating the argument $s$ times, we get
\[
\tau_{m_1h_1+\cdots+m_sh_s}(f) = \sum_{k_s=0}^{n_s-1} \cdots \sum_{k_{1}=0}^{n_{1}-1} 
\left(\prod_{i=0}^{s-1} a_{s-i,m_{s-i},k_{s-i}} \right) (\tau_{h_s})^{k_s}  \cdots (\tau_{h_1})^{k_1}(f)
\]
In other words, if we consider the space
$$W=\mathbf{span} \{f, (\tau_{h_1})^{a_1}(\tau_{h_2})^{a_2}\cdots (\tau_{h_s})^{a_s}(f): 0\leq a_i<n_i, i=1,2,\cdots,s\}, $$
then
$$\tau_{m_1h_1+\cdots+m_sh_s}(f)\in W \text{ for all } (m_1,\cdots,m_s) \in \mathbb{Z}^s.$$
Hence every translation of $f$ belongs to $W$, since $h_1,\cdots,h_s$ span a dense additive subgroup of $\mathbb{R}^d$ and $W$ is finite
dimensional. The proof ends by applying Anselone-Korevaar's theorem.  
\end{proof}

\begin{proposition}
Every open subset $V$ of  $\mathbb{R}^d$ contains a finite set of vectors $\{h_0,\cdots,h_s\}$ which span a dense subgroup of $\mathbb{R}^d$.
\end{proposition}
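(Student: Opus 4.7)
My plan is to reduce to the case of a small open ball in $V$ and then build the generating set by combining a scaled copy of the standard integer lattice with one vector in an irrational direction, appealing to Kronecker's multidimensional density theorem.

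Since $V$ is open, it contains some open ball $B(x_0,r)$. I would fix some $\delta\in(0,r/2)$, let $e_1,\ldots,e_d$ denote the canonical basis of $\mathbb{R}^d$, and choose real numbers $\alpha_1,\ldots,\alpha_d\in(-1,1)$ such that $1,\alpha_1,\ldots,\alpha_d$ are linearly independent over $\mathbb{Q}$. Such a choice is available because the tuples failing this independence form a countable union of affine hyperplanes in $\mathbb{R}^d$, which has empty interior in $(-1,1)^d$ by the Baire category theorem. I would then take
$$ h_0 = x_0,\quad h_i = x_0 + \delta e_i \ \ (1\le i\le d),\quad h_{d+1} = x_0 + \delta(\alpha_1 e_1 + \cdots + \alpha_d e_d), $$
all of which lie in $B(x_0,r)\subseteq V$.

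Next I would verify density of the subgroup $G$ generated by $h_0,h_1,\ldots,h_{d+1}$. The key point is that $G$ contains all pairwise differences: in particular, $h_i-h_0=\delta e_i$ for $i=1,\ldots,d$, so $G\supseteq\delta\mathbb{Z}^d$; and $h_{d+1}-h_0=\delta(\alpha_1,\ldots,\alpha_d)$, so $G$ contains every translate of $\delta\mathbb{Z}^d$ by an integer multiple of $\delta(\alpha_1,\ldots,\alpha_d)$. Kronecker's density theorem, applied to the $\mathbb{Q}$-linearly independent tuple $1,\alpha_1,\ldots,\alpha_d$, will then guarantee that $\{m(\alpha_1,\ldots,\alpha_d)\bmod\mathbb{Z}^d\}_{m\in\mathbb{Z}}$ is dense in the torus $\mathbb{T}^d$; rescaling by $\delta$ transfers this to density in $\mathbb{R}^d$ of the subset of $G$ just identified, and hence $G$ itself is dense.

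The only substantive input is Kronecker's theorem; everything else is engineering. The arrangement of the $h_i$ as small perturbations of a single base point $x_0$ is chosen precisely so that their pairwise differences simultaneously expose the rational lattice $\delta\mathbb{Z}^d$ and a properly irrational direction, which is exactly what Kronecker consumes. The mild pitfall worth flagging is that it is the differences $h_i-h_0$, rather than the $h_i$ themselves, that carry the density statement: this is what makes the particular location of $x_0$ inside $V$ drop out of the argument.
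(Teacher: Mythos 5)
Your argument is correct and essentially the paper's own proof: Kronecker's theorem furnishes a dense subgroup of $\mathbb{R}^d$ whose generators (the scaled standard lattice together with one vector whose coordinates are $\mathbb{Q}$-independent jointly with $1$) can be shrunk into a small ball, and translating everything to a base point $x_0\in V$ while passing to the differences $h_i-h_0$ makes the location of $x_0$ irrelevant, exactly as in the paper. One small quantitative repair: since $\alpha_i\in(-1,1)$, the vector $\delta(\alpha_1 e_1+\cdots+\alpha_d e_d)$ has norm up to $\delta\sqrt{d}$, so you should take $\delta<r/\sqrt{d}$ (say) rather than merely $\delta<r/2$ to ensure $h_{d+1}\in B(x_0,r)$ when $d\geq 5$.
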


\begin{proof} A well known result by Kronecker states that $\mathbb{Z}^d+(\theta_1,\theta_2,\cdots,\theta_d)\mathbb{Z}$ is dense in $\mathbb{R}^d$ if and only if 
$\{1,\theta_1,\cdots,\theta_d\}$ is $\mathbb{Q}$-linearly independent (see \cite[Theorem 442, page 382]{HW}).  Of course, the same claim holds true for the subgroup $\frac{1}{N}(\mathbb{Z}^d+(\theta_1,\theta_2,\cdots,\theta_d)\mathbb{Z})$ for every $N>0$. Hence, every open neighborhood of $(0,0,\cdots,0)$ contains a finite set of vectors $\{h_0,\cdots,h_s\}$ which span a dense subgroup of $(\mathbb{R}^d,+)$.

Let  $V$ be any open subset of  $\mathbb{R}^d$ and let  $x_0\in V$ and $\varepsilon >0$ be such that $x_0+B_0(\varepsilon)\subseteq V$.  Take $\{h_1,\cdots,h_s\}\subseteq B_0(\varepsilon) $  such that $h_1\mathbb{Z}+\cdots+h_s\mathbb{Z}$ is  dense in $\mathbb{R}^d$. Then $\{x_0, x_0+h_1,\cdots, x_0+h_s\}\subset V$  spans a dense additive subgroup   $\mathbb{R}^d$. 
\end{proof}

\begin{corollary}[Rad\'{o}'s theorem for higher dimensions] \label{rado_d} Assume that $f:\mathbb{R}^d\to\mathbb{R}$ is a continuous solution of 
\begin{equation}\label{radod}
a_0(h)f(x)+a_1(h)f(x+h)+\cdots +a_n(h)f(x+nh)=0 \text{ for all } x\in\mathbb{R}^d \text{ and all } h\in U 
\end{equation}
for a certain open set $U\subseteq \mathbb{R}^d$ and certain continuous functions $a_k:U\to\mathbb{R}$ such that $a=(a_0,\cdots,a_n)$ does not   vanish identically. Then $f$ is an exponential polynomial in $d$ variables. 
\end{corollary}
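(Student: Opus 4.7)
The plan is to reduce the corollary to Theorem~\ref{main} by extracting, from the open set $U$, a finite family of translation vectors that (i) span a dense additive subgroup of $\mathbb{R}^d$ and (ii) satisfy the dimension bound \eqref{uno} with $n_i=n$. Since continuous functions are distributions and Theorem~\ref{main} concludes that $f$ is (a.e., hence everywhere by continuity) an exponential polynomial, this will suffice.

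First I would use the continuity of $a=(a_0,\dots,a_n)$ and the hypothesis that $a$ does not vanish identically on $U$: there is some $h_0\in U$ with $a(h_0)\neq 0$, and by continuity there is an open set $V\subseteq U$ containing $h_0$ on which $a(h)\neq 0$ for every $h\in V$. This is the only place where the non-vanishing hypothesis enters, and I expect it to be the only subtle point in the argument.

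Next I would apply Proposition~2.2 (the preceding proposition) to this open set $V$, obtaining vectors $h_1,\dots,h_s\in V$ such that $h_1\mathbb{Z}+\cdots+h_s\mathbb{Z}$ is dense in $\mathbb{R}^d$. For each $h_i\in V$ the equation \eqref{radod} evaluated at $h=h_i$ reads
\[
a_0(h_i)\,f+a_1(h_i)\,\tau_{h_i}(f)+\cdots+a_n(h_i)\,\tau_{h_i}^{\,n}(f)=0,
\]
a nontrivial linear relation among $f,\tau_{h_i}(f),\dots,\tau_{h_i}^{\,n}(f)$ since $a(h_i)\neq 0$. Hence
\[
\dim\mathbf{span}\{f,\tau_{h_i}(f),\dots,\tau_{h_i}^{\,n}(f)\}\leq n,
\]
so condition \eqref{uno} holds with $n_i=n$ for every $i=1,\dots,s$.

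Finally I would invoke Theorem~\ref{main} on the continuous function $f$ (viewed as a distribution on $\mathbb{R}^d$) with this choice of $h_1,\dots,h_s$ and $n_i=n$, concluding that $f$ coincides almost everywhere with an exponential polynomial; by continuity the equality holds everywhere. The only real obstacle is the first step, i.e.\ checking that the non-vanishing of $a$ at a single point is enough to secure \emph{many} parameters $h_i$ at which \eqref{radod} is a genuine, non-degenerate linear recursion for $f$; once that open set $V$ is in hand, Proposition~2.2 and Theorem~\ref{main} do all the remaining work.
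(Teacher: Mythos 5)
Your proposal is correct and follows essentially the same route as the paper's own proof: use continuity of $a$ to find an open set $V\subseteq U$ where $a$ never vanishes, pick $h_1,\dots,h_s\in V$ generating a dense subgroup via the preceding proposition, observe that \eqref{radod} at each $h_i$ gives the bound \eqref{uno} with $n_i=n$, and conclude by Theorem \ref{main} together with continuity of $f$. No substantive differences to report.
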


\begin{proof}
Let $h_0\in U$ be such that $a(h_0)\neq (0,\cdots,0)$. The continuity of $a$ implies that $a(h)\neq (0,\cdots,0)$ for all $h\in V$ for a certain open set $V\subseteq U$. Let us take $\{h_1,\cdots, h_s\}\subset V$ spanning a dense subgroup $h_1\mathbb{Z}+\cdots+h_s\mathbb{Z}$ of $\mathbb{R}^d$. Then   \eqref{radod} implies that 
\begin{equation}\label{dos}
  \dim \mathbf{span} \{f, \tau_{h_i}(f),\cdots, (\tau_{h_i})^{n}(f)\} \leq n, \  \  i=1,\cdots,s,
\end{equation}
and Theorem \ref{main} implies that $f$ is equal almost everywhere to an exponential polynomial.  Hence $f$ itself is an exponential polynomial, since $f$ is continuous.
\end{proof}

\section{Solution of Popoviciu-Ionescu functional equation}
We are now ready for a proof of the following result:

\begin{theorem}[Solution of Popoviciu-Ionescu functional equation in one variable]
Let $f:\mathbb{R}\to\mathbb{R}$ be a continuous function which solves \eqref{Po} for all $x,h\in\mathbb{R}$. Then $f$ is an exponential polynomial.
\end{theorem}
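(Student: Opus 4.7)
The plan is to invoke the classical theorem of Ghiorcoiasiu and Roscau \cite{GR}, recalled in the introduction, to bridge the determinantal equation \eqref{Po} to the linear Rad\'{o}-type equation \eqref{rado}, and then feed the outcome into the machinery already developed in this paper. By \cite{GR}, any continuous $f:\R\to\R$ satisfying \eqref{Po} also satisfies \eqref{rado} on some interval $(0,H)$, with continuous coefficients $a_0,\dots,a_n$ such that $a=(a_0,\dots,a_n)$ does not vanish identically. Corollary \ref{rado_d} applied in dimension $d=1$ with $U=(0,H)$ (where $U$ is already viewed as an open subset of $\R$) then yields immediately that $f$ is an exponential polynomial in one real variable.

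From this standpoint the theorem is essentially a corollary of what has been proved in the previous section; the substantive work has been discharged in Theorem \ref{main} and Corollary \ref{rado_d}. The whole point of the approach is that Anselone--Korevaar's theorem, funnelled through Theorem \ref{main}, replaces Rad\'{o}'s original differential-equation argument. So the only genuine obstacle is the reduction of \eqref{Po} to \eqref{rado}, which the plan delegates to the literature.

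If instead one wished to derive \eqref{rado} from \eqref{Po} self-contained, the natural route is to study the maximal rank $r_0\le n$ of the symmetric Hankel matrix $M(x,h)=[f(x+(i+j)h)]_{i,j=0}^n$: pick $(x_0,h_0)$ and an $r_0\times r_0$ submatrix of $M(x_0,h_0)$ of non-vanishing determinant, use continuity of $f$ to find an open $V\subset\R^2$ on which that same minor remains non-zero, and apply Cramer's rule to produce a continuous non-trivial null vector of $M$ on $V$. The hard step in such a direct argument is to promote the resulting relation, whose coefficients a priori depend on both $x$ and $h$, to one whose coefficients depend only on the step size $h$; this is exactly where the translation invariance of \eqref{Po} and the Hankel structure of $M$ must be exploited, and is the technical heart of the Ghiorcoiasiu--Roscau proof. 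Once this independence from $x$ is secured, Corollary \ref{rado_d} finishes the argument as in the short plan above.
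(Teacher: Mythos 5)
Your proposal is correct and follows essentially the same route as the paper: invoke the Ghiorcoiasiu--Roscau reduction of \eqref{Po} to \eqref{rado} and then apply Corollary \ref{rado_d} with $d=1$ and $U=(0,H)$. The extra sketch of a self-contained reduction is not needed, since the paper likewise delegates that step to \cite{GR}.
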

\begin{proof}  Ghiorcoiasiu and Roscau's theorem  \cite{GR} guarantees that $f$ solves \eqref{rado} for some continuous function $a(h)=(a_0(h),\cdots,a_n(h))$ such that   $a(h_0)\neq (0,\cdots,0)$ for some $h_0\in (0,H)$. Indeed, they prove that we can impose $a_n(h)=1$ for all $h\in (0,H)$ (see \cite[Theorem 5]{GR}). The result follows just applying Corollary \ref{rado_d} to $f$.
\end{proof}

The proof above appeared by the first time in \cite{Aviejo}. Here, we also study the continuous solutions of Popoviciu-Ionescu functional equation in several variables:
\begin{equation}\label{Po_d}
\det\left[\begin{array}{cccccc}
f(x) & f(x+h) &  \cdots & f(x+nh) \\
f(x+h) & f(x+2h) & \cdots & f(x+(n+1)h)\\
\vdots & \vdots & \ \ddots & \vdots \\
f(x+nh) & f(x+(n+1)h) &  \cdots &  f(x+2nh)\\
\end{array} \right] =0 \text{ for all } x,h\in\mathbb{R}^d.
\end{equation}
The technique used by Ghiorcoiasiu and Roscau to reduce this equation to equation \eqref{rado} fails in this context, since the proof of Theorem 3 of their paper \cite{GR} strongly depends on the fact that all arguments of the function live in the very same line. 
%
%
%
%

On the other hand, the following holds true:

\begin{theorem} Every exponential polynomial $f:\mathbb{R}^d\to\mathbb{C}$ solves  the equation \eqref{Po_d} for all $n$ large enough. 
\end{theorem}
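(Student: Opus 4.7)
My plan is to reduce the $d$-variable Popoviciu-Ionescu determinant to a one-variable Hankel determinant and exploit the fact that exponential polynomials restricted to a line satisfy a linear recurrence of bounded order. Write the given exponential polynomial as $f(x)=\sum_{j=1}^{r} p_j(x)\,e^{\lambda_j\cdot x}$ with polynomials $p_j$ on $\mathbb{R}^d$ and $\lambda_j\in\mathbb{C}^d$, and set $N:=\sum_{j=1}^{r}(\deg p_j+1)$. Fix $x,h\in\mathbb{R}^d$ arbitrarily and define $g(t):=f(x+th)$ for $t\in\mathbb{R}$. The $(i,j)$-entry of the matrix in \eqref{Po_d} is precisely $g(i+j)$, so it suffices to show that the Hankel determinant $\det[g(i+j)]_{i,j=0}^{n}$ vanishes whenever $n\ge N$, uniformly in $x$ and $h$.

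The key observation is that $g$ lies in a translation-invariant finite-dimensional space whose size depends only on $f$. Indeed,
\[
g(t)=\sum_{j=1}^{r}\bigl(p_j(x+th)\,e^{\lambda_j\cdot x}\bigr)\,e^{(\lambda_j\cdot h)t},
\]
and each bracketed factor is a polynomial in $t$ of degree at most $\deg p_j$. Consequently $g$ belongs to
\[
V_h:=\mathbf{span}\bigl\{\,t^{k}e^{(\lambda_j\cdot h)t}: 1\le j\le r,\ 0\le k\le \deg p_j\bigr\},
\]
a space of dimension at most $N$ which is invariant under all translations $t\mapsto t+c$. I deliberately use this crude ambient space because the frequencies $\lambda_j\cdot h$ may collide for different $j$, and working with the ambient $V_h$ avoids a case-analysis depending on the direction of $h$.

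To finish, note that the $i$-th row of the matrix in \eqref{Po_d} is the list of values of $g(\,\cdot\, + i)\in V_h$ at the points $0,1,\dots,n$. As soon as $n\ge N$, the $n+1$ translates $g,\,g(\cdot+1),\dots,g(\cdot+n)$ are linearly dependent in $V_h$, giving a nontrivial relation $\sum_{i=0}^{n}c_i\,g(\cdot+i)\equiv 0$; evaluating it at $t=0,1,\dots,n$ turns it into a nontrivial linear combination of the rows that is zero, so the determinant vanishes. Since $N$ depends only on $f$, this holds for every $n\ge N$ and every $x,h\in\mathbb{R}^d$, which is the claim.

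There is no genuinely deep step in this plan; the only point that needs care is to keep the dimension bound \emph{uniform} in $x$ and $h$, which is exactly what the ambient space $V_h$ provides. An equivalent phrasing would be to observe that for each fixed $x,h$ the sequence $a_k:=g(k)$ satisfies a linear recurrence with constant coefficients of order at most $N$, and to invoke the classical fact that the Hankel determinants of such a sequence vanish from index $N$ onward.
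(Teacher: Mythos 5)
Your proof is correct and follows essentially the same idea as the paper: both arguments rest on the fact that $f$ and its translates along any direction $h$ lie in a finite-dimensional translation-invariant space of dimension bounded independently of $h$, so that for $n$ at least that bound the $n+1$ translates $f,\tau_h f,\dots,\tau_{nh}f$ satisfy a nontrivial linear relation which forces the rows of the Hankel-type matrix in \eqref{Po_d} to be dependent. The only difference is cosmetic: the paper invokes $\dim\mathbf{span}\{\tau_h f : h\in\mathbb{R}^d\}<\infty$ abstractly, while you restrict to the line $x+th$ and exhibit an explicit spanning set with the explicit bound $N=\sum_j(\deg p_j+1)$.
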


\begin{proof} If $f$ is an exponential polynomial then 
$\tau(f)=\mathbf{span}\{\tau_hf:h\in\mathbb{R}^d\}$ is a finite dimensional space. Hence, if $n=\dim \tau(f)$ and $h\in\mathbb{R}^d$, there exist coefficients $a_k(h)$ such that   $a_0(h)f+a_1(h)\tau_h(f)+\cdots+a_n(h)\tau_{nh}(f)$ vanishes identically and, henceforth, $f$ solves  \eqref{Po_d}.
\end{proof}

Moreover, we can demonstrate the following:

\begin{lemma} \label{pro} Assume that $f:\mathbb{R}^d\to\mathbb{C}$ is a continuous solution of \eqref{Po_d}. Then $f$, restricted to any line $L\subset \mathbb{R}^d$, defines an exponential polynomial.
\end{lemma}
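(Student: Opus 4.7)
The plan is to reduce the statement to the one-dimensional Popoviciu-Ionescu theorem that was just established. Parameterize the line $L$ as $L = \{x_0 + tv : t \in \mathbb{R}\}$ for some base point $x_0 \in \mathbb{R}^d$ and direction vector $v \in \mathbb{R}^d\setminus\{0\}$, and set $g(t) = f(x_0 + tv)$. Since $f$ is continuous, so is $g$, so it suffices to show that $g$ solves the one-dimensional equation \eqref{Po}; the previous theorem then gives that $g$ is an exponential polynomial on $\mathbb{R}$.

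The key observation is that a line in $\mathbb{R}^d$ is closed under the affine combinations $x \mapsto x + kh$ appearing in \eqref{Po_d} as soon as $h$ is chosen parallel to $L$. Concretely, I substitute $x = x_0 + tv$ and $h = sv$ (arbitrary $t,s \in \mathbb{R}$) into \eqref{Po_d}. Then
\[
f(x + (i+j)h) = f(x_0 + (t + (i+j)s)v) = g(t + (i+j)s),
\]
so the $(n+1)\times(n+1)$ matrix whose determinant appears in \eqref{Po_d} becomes exactly the Hankel matrix $[g(t + (i+j)s)]_{i,j=0}^{n}$. Since \eqref{Po_d} holds for this particular choice of $x$ and $h$, we conclude that
\[
\det[g(t+(i+j)s)]_{i,j=0}^{n} = 0 \text{ for all } t,s \in \mathbb{R},
\]
which is precisely equation \eqref{Po} for $g$.

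At this point I invoke the one-variable Solution of Popoviciu-Ionescu proved in the preceding theorem to conclude that $g$, and hence $f|_L$, is an exponential polynomial. There is no real obstacle to worry about: the whole argument rests on the trivial but crucial remark that choosing $h$ collinear with $v$ keeps all entries of the Popoviciu-Ionescu matrix inside $L$, reducing the $d$-dimensional equation on the line to its one-dimensional counterpart. The only thing to be slightly careful about is that we are free to choose $h = sv$ for arbitrary real $s$ (including $s<0$), since \eqref{Po_d} is posited for all $h \in \mathbb{R}^d$; this matches the hypothesis of \eqref{Po}, which is posited for all $x,h \in \mathbb{R}$.
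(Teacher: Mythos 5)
Your proposal is correct and is essentially the paper's own argument: the paper also defines $F_{x_0,h_0}(t)=f(x_0+th_0)$ and observes that it is a continuous solution of \eqref{Po}, then applies the one-variable theorem. You simply spell out the substitution $x=x_0+tv$, $h=sv$ that the paper leaves implicit.
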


\begin{proof} Given $x_0,h_0\in \mathbb{R}^d$, we set $F_{x_0,h_0}(t)=f(x_0+th_0)$. Then $F_{x_0,h_0}$ is a continuous solution of  \eqref{Po}, so that it is an exponential polynomial.  
\end{proof}

For polynomial functions,  Prager and Schwaiger demonstrated in 2009 \cite[Theorem 14]{PS} that  if  $K$ is a field and $ f : K^d\to K $ is an ordinary algebraic polynomial function separately in each variable (which means that for any $1\leq k\leq d$ and any point $(a_1,\cdots,a_{k-1},a_{k+1},\cdots,a_d)\in K^{d-1}$, the function $f(a_1,\cdots,a_{k-1},x_k,a_{k+1},\cdots,a_d)$ is an ordinary algebraic polynomial in $x_k$) then $f$ is an ordinary algebraic polynomial function in $d$ variables provided that $K$ is finite or uncountable. Furthermore, for every countable infinite field $K$ there exists a function $f:K^2\to K$ which is an ordinary algebraic polynomial function separately in each variable and is not a generalized polynomial in both variables jointly. Of course, the result does not assume continuity of $f$ nor any common upper bound for the degrees of the polynomials $f(a_1,\cdots,a_{k-1},x_k,a_{k+1},\cdots,a_d)$. 

On the other hand, a result of this type is false for exponential polynomials in two or more variables since, for each $d\geq 2$, the function $f(x_1,\cdots,x_d)=e^{x_1x_2\cdots x_d}$ is not an exponential polynomial and, on the other hand, for every point $(a_1,\cdots,a_{k-1},a_{k+1},\cdots,a_d)\in \mathbb{R}^{d-1}$, the function $f(a_1,\cdots,a_{k-1},x_k,a_{k+1},\cdots,a_d)$ is an exponential polynomial in $x_k$ (in fact, it is an exponential function). 

Fortunately,  the lines that appear in Lemma \ref{pro} include 
\[
\{(a_1,\cdots,a_{k-1},x_k,a_{k+1},\cdots,a_d)=
(a_1,\cdots,a_{k-1},0,a_{k+1},\cdots,a_d)+x_k (0,\cdots,0,1,0,\cdots,0): x_k\in\mathbb{R}\}
\] 
as a very particular case, so that we have some extra information that could be useful to characterize the solutions of \eqref{Po_d} for arbitrary $d$. 

The keystone to get such a result is an old result about exponential polynomials demonstrated by  A. L. Ronkin  in 1978 \cite{Ron} (see \cite{Ron1} for the details):

\begin{theorem}[Ronkin,1978] Assume that $f:\mathbb{R}^d\to\mathbb{C}$ is an exponential polynomial in each variable when the others are fixed. Then the function has the form 
\[
f(x_1,\cdots,x_d)=\sum_{k=1}^sP_k(x_1,x_2,\cdots,x_d)e^{\lambda_k(x_1,\cdots,x_d)},
\]
where the functions $P_k$ are algebraic polynomials and the functions $\lambda_k$ are polynomials linear in each variable. 
\end{theorem}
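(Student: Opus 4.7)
The plan is to isolate one variable, say $x_d$, and collect the others into $y=(x_1,\dots,x_{d-1})$. For each fixed $y$ the hypothesis produces the canonical one-variable exponential-polynomial decomposition
\[
f(x_d,y) = \sum_{j=1}^{s(y)} q_j(x_d;y)\, e^{\mu_j(y)\, x_d},
\]
with distinct $\mu_j(y)\in\mathbb{C}$ and nonzero polynomials $q_j(\cdot;y)\in\mathbb{C}[x_d]$. I split the argument into three subgoals: (i) the $x_d$-complexity $n(y)=\sum_j(1+\deg q_j(\cdot;y))$ is uniformly bounded in $y$; (ii) each exponent $\mu_j(y)$ is a polynomial in $y$ linear in every coordinate $y_i$; (iii) each polynomial factor $q_j(x_d;y)$ is jointly polynomial in $y$.

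The hard part will be (i). My plan for (i) is to pick $y_0$ at which $n(y_0)$ is locally maximal, form the minimal annihilating operator $P_{y_0}(\partial_{x_d})$ of $f(\cdot,y_0)$ (of order $n(y_0)$), and bootstrap it into a global operator $L=\sum_{i=0}^{n(y_0)} c_i(y)\,\partial_{x_d}^i$ with $Lf\equiv 0$ whose coefficients $c_i(y)$ are continuous and, in fact, exponential polynomials in each $y_k$ separately. The bootstrap relies on the separate EP hypothesis in every $y$-variable: the $c_i(y)$ are built from the elementary symmetric functions of the $\mu_k(y)$'s with multiplicities, and an argument parallel to (ii) below forces them to have the required regularity. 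Once $L$ is in hand, $Lf=0$ yields $n(y)\le n(y_0)$ everywhere.

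For (ii), work on the open set $V$ where $s(y)$ is constant and the $\mu_j(y)$ are pairwise distinct, so that standard perturbation theory produces continuous (in fact analytic) branches $\mu_j$. Fix a coordinate direction $e_i$ and study $\psi(t):=\mu_j(y_0+te_i)$. An $x_d$-differential operator with coefficients built from the remaining $\mu_k$'s extracts the $j$-th summand, and then, for every $x_0\in\mathbb{R}$, $e^{x_0\psi(t)}$ is seen to lie in a finite-dimensional translation-invariant subspace of continuous functions of $t$. A rigidity lemma---if $e^{c\phi(t)}$ is an exponential polynomial in $t$ for every $c\in\mathbb{R}$ and $\phi$ is continuous, then $\phi$ is affine---then gives $\psi$ affine, hence $\mu_j$ linear in $y_i$. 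Applying this for each $i$ and using continuity in $y$ yields that $\mu_j(y)$ is a polynomial in $y$ linear in each coordinate.

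With (i) and (ii) in place, part (iii) is clean. For fixed $x_d$ and fixed $y_{-i}:=(y_k)_{k\ne i}$, matching $f(x_d,y)$ against its $y_i$-EP decomposition, in light of the now-explicit linear-in-$y_i$ exponents $\mu_j(y)x_d$, shows that each $q_j(x_d;y)$ is polynomial in $y_i$. Doing this for every $i$ and combining with polynomiality in $x_d$, Prager and Schwaiger's theorem (cited above) promotes each $q_j$ to a joint polynomial in all $d$ variables. Substituting back gives $f=\sum_j P_j(x_1,\dots,x_d)\,e^{\lambda_j(x_1,\dots,x_d)}$ with $P_j$ polynomial and $\lambda_j(x_1,\dots,x_d):=\mu_j(y)\,x_d$ linear in each variable, completing the proof.
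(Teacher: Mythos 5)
You should first note that the paper itself contains no proof of this statement: it is imported verbatim as Ronkin's theorem, with the argument delegated to \cite{Ron} and the detailed exposition in \cite{Ron1}. So your attempt has to stand on its own, and as written it is an outline whose central step is circular. In (i) you ``pick $y_0$ at which $n(y_0)$ is locally maximal,'' but $n(y)$ is an integer-valued function with no a priori bound on any ball, so the existence of a local maximum presupposes exactly the local boundedness that (i) is supposed to establish. The standard repair would be a Baire-category argument on the level sets $E_m=\{y:\ n(y)\le m\}$, but that is not in your plan, and passing from ``$E_m$ dense in a ball'' to ``$n\le m$ on a ball'' needs some regularity of $f$ in $y$ which you do not have: the hypothesis gives a separate one-variable decomposition at each fixed $y$, with no a priori link between the exponents and degrees at nearby values of $y$, and $f$ is not assumed jointly continuous. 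The ``bootstrap'' producing a global annihilating operator $L=\sum_i c_i(y)\partial_{x_d}^i$ with coefficients of the required regularity is asserted by appeal to ``an argument parallel to (ii)''; but (ii), as you set it up, works on an open set where $s(y)$ is constant and invokes ``standard perturbation theory'' for continuous or analytic branches $\mu_j(y)$ --- both of which already presuppose the uniform finiteness and continuity that only (i) could deliver. So (i) leans on (ii) and (ii) leans on (i).

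Two further ingredients are stated but not proved, and neither is routine. First, extracting the $j$-th summand by applying an $x_d$-differential operator with $y$-dependent coefficients and then reading the result as an exponential polynomial in $y_i$ requires knowing that $\partial_{x_d}f(x_d,\cdot)$ is still separately an exponential polynomial; it is only a pointwise limit of exponential polynomials in $y_i$, and such limits need not be exponential polynomials unless one already has uniform bounds on degrees and frequencies --- again the content of (i). Second, the rigidity lemma (``$\phi$ continuous and $e^{c\phi(t)}$ an exponential polynomial for every $c\in\mathbb{R}$ imply $\phi$ affine'') is plausible but nontrivial and nowhere argued; note that for a single $c$ it is false, e.g.\ $\phi(t)=\log\cosh t$ gives $e^{\phi}=\cosh t$. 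Ronkin's actual proof is substantially more elaborate (it proceeds through complexification and facts about entire functions of exponential type), and none of that hard analysis is replaced by anything in your sketch; as it stands the proposal identifies the right subgoals but does not close any of the genuinely difficult ones.
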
 

We are now in conditions to state and demonstrate the following

\begin{theorem}[Solution of Popoviciu-Ionescu functional equation, several variables] \label{carRon}
Assume that  $f:\mathbb{R}^d\to\mathbb{C}$ is a continuous solution of \eqref{Po_d}. Then $f$ is an exponential polynomial.
\end{theorem}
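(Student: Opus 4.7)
The plan is to combine Lemma \ref{pro} with Ronkin's theorem, and then to invoke Lemma \ref{pro} a second time---on non-axis lines---to promote Ronkin's conclusion to the standard form of an exponential polynomial. Applied to axis-parallel lines, Lemma \ref{pro} tells us that $f$ is a univariate exponential polynomial in each coordinate when the others are fixed, so Ronkin's theorem yields a representation
\[
f(x)=\sum_{k=1}^{s}P_{k}(x)\,e^{\lambda_{k}(x)}
\]
with nonzero polynomials $P_k$ and pairwise distinct multi-affine polynomials $\lambda_k$. After grouping together any terms whose exponents differ by a constant (a constant factor can always be absorbed into $P_k$), we may further assume no difference $\lambda_k-\lambda_\ell$ with $k\neq\ell$ is constant. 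The theorem will follow once we prove that every $\lambda_k$ is actually affine in $x$, for then $\lambda_k(0)$ can be absorbed into $P_k$ and $f$ takes the standard exponential-polynomial form.

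To establish affinity of $\lambda_k$, I would restrict $f$ to a line $L_v=\{tv:t\in\mathbb{R}\}$ through the origin and set $\tilde P_k(t)=P_k(tv)$, $\tilde\lambda_k(t)=\lambda_k(tv)$. Since each nonconstant multi-affine $\lambda_k-\lambda_\ell$ has a top-degree-in-$t$ coefficient that is a nonzero polynomial in $v$, $v$ can be chosen outside a proper algebraic subset of $\mathbb{R}^d$ so that no $\tilde\lambda_k-\tilde\lambda_\ell$ degenerates to a constant in $t$; then the family $\{e^{\tilde\lambda_k(t)}\}$ is linearly independent over $\mathbb{C}[t]$ by the standard result on exponentials of distinct polynomials. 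By Lemma \ref{pro}, $f|_{L_v}$ is a univariate exponential polynomial, hence annihilated by some nonzero operator $P(D)=\sum_{j=0}^{N}c_jD^j$. Iterating $D(Re^{\mu})=(R'+R\mu')e^{\mu}$ one obtains
\[
P(D)\bigl(\tilde P_k\,e^{\tilde\lambda_k}\bigr)=Q_k(t)\,e^{\tilde\lambda_k(t)},
\]
and when $\deg\tilde\lambda_k\geq 1$ the polynomial $Q_k$ has degree exactly $\deg\tilde P_k+N(\deg\tilde\lambda_k-1)$ with leading coefficient $c_{N}\,\mathrm{lc}(\tilde P_k)\,\mathrm{lc}(\tilde\lambda_k')^{N}\neq 0$. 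Linear independence forces every $Q_k$ to vanish, which, together with $\tilde P_k\neq 0$, rules out $\deg\tilde\lambda_k\geq 2$; thus each $\tilde\lambda_k$ is affine in $t$.

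To finish, write $\lambda_k(x)=\sum_{S\subseteq\{1,\ldots,d\}}c_S^{(k)}\prod_{i\in S}x_i$; then $\lambda_k(tv)=\sum_S c_S^{(k)}\bigl(\prod_{i\in S}v_i\bigr)t^{|S|}$, and its affinity in $t$ says that the coefficient of $t^{m}$ vanishes for every $m\geq 2$ and every $v$ in a dense open subset of $\mathbb{R}^d$. Being a polynomial in $v$, this coefficient vanishes identically, forcing $c_S^{(k)}=0$ whenever $|S|\geq 2$; hence $\lambda_k$ is affine, and absorbing $\lambda_k(0)$ into $P_k$ yields the desired exponential-polynomial form. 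The step I expect to be the main obstacle is the degree bookkeeping in the middle paragraph: performing the Leibniz iteration cleanly enough to pin down the leading coefficient of $Q_k$ while simultaneously arranging all the algebraic genericity conditions on $v$ (distinctness modulo constants of the $\tilde\lambda_k$ and nonvanishing of the relevant leading coefficients) so that a single $v$ meets them all.
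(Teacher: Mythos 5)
Your proof is correct and follows the same overall strategy as the paper: Lemma \ref{pro} applied to axis-parallel lines plus Ronkin's theorem gives $f=\sum_k P_k e^{\lambda_k}$ with multi-affine $\lambda_k$, and then restrictions of $f$ to lines through the origin are used to rule out exponents of total degree $\geq 2$. The difference lies in how that last step is executed. The paper fixes one explicit direction $h_0$ (the $0$--$1$ vector supported on a top-degree monomial of an offending $\lambda_{k_0}$) and simply asserts that $F_{0,h_0}(t)=f(th_0)$ cannot be a univariate exponential polynomial because $\lambda_{k_0}(th_0)$ has degree $N\geq 2$ in $t$; it does not address possible degeneracies along that particular line (for instance $P_{k_0}(th_0)\equiv 0$, or distinct $\lambda_k$'s restricting to exponents that differ by constants and cancel). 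Your version --- normalizing so that no $\lambda_k-\lambda_\ell$ is constant, choosing the direction $v$ generically so these nondegeneracies persist on $L_v$, and then using an annihilating operator $P(D)$ together with linear independence of the $e^{\tilde\lambda_k}$ over $\mathbb{C}[t]$ to force every $Q_k=0$ and hence $\deg_t\tilde\lambda_k\leq 1$ --- supplies exactly the justification the paper leaves implicit, at the cost of the genericity bookkeeping; the final step recovering $c^{(k)}_S=0$ for $|S|\geq 2$ from a dense open set of admissible $v$ is also sound. One small slip: your degree and leading-coefficient formula for $Q_k$ (degree $\deg\tilde P_k+N(\deg\tilde\lambda_k-1)$ with leading coefficient $c_N\,\mathrm{lc}(\tilde P_k)\,\mathrm{lc}(\tilde\lambda_k')^N$) is valid only when $\deg\tilde\lambda_k\geq 2$; when $\deg\tilde\lambda_k=1$ the contributions of the various powers of $D$ land in the same degree and can (indeed must) cancel, the relevant coefficient being $P(\tilde\lambda_k')\,\mathrm{lc}(\tilde P_k)$. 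Since you only invoke the formula to exclude $\deg\tilde\lambda_k\geq 2$, this does not affect the argument, but the stated hypothesis should read ``$\deg\tilde\lambda_k\geq 2$'' rather than ``$\geq 1$''.
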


\begin{proof} It follows from Theorem \ref{carRon} and Lemma \ref{pro} that $f$ has the form 
\[
f(x_1,\cdots,x_d)=\sum_{k=1}^sP_k(x_1,x_2,\cdots,x_d)e^{\lambda_k(x_1,\cdots,x_d)},
\]
where the functions $P_k$ are algebraic polynomials and the functions $\lambda_k$ are polynomials linear in each variable. We need to demonstrate that the total degree of $\lambda_k$ is $\leq 1$ for every $k$. Assume, on the contrary, that $\lambda_{k_0}$ has total degree $N>1$ for a certain $k_0$. Then 
\[
\lambda_{k_0}(x_1,\cdots,x_d)=\alpha_0+\sum_{s=1}^N\sum_{\#\{i_1,i_2,\cdots,i_s\}=s,1\leq i_k\leq d}c^s_{i_1,\cdots,i_s}x_{i_1}\cdots x_{i_s}
\]
with at least one of the coefficients of the monomials of degree $N$ being different from $0$. Assume, for example, that $c^N_{j_1,\cdots,j_N}\neq 0$ and set $h_0=(b_1,\cdots,b_d)$, where $b_{j_k}=1$ for $k=1,\cdots,N$ and $b_{j}=0$ if $j\not\in \{j_1,\cdots,j_N\}$. Then  $\lambda_{k_0}(th_0)$ is a polynomial in $t$ whose leading term is given by  $c^N_{j_1,\cdots,j_N}t^N$. Consequently, the function $F_{0,h_0}(t)=f(th_0)$ is not an exponential polynomial, which contradicts Lemma \ref{pro}. Hence the total degree of $\lambda_{k}$ is $\leq 1$ for all $k$ and $f$ is an exponential polynomial. This ends the proof. 
\end{proof}


\bigskip

\noindent J. M.~ALMIRA\\
Departamento de Ingenier\'{\i}a y Tecnolog\'{\i}a de Computadores,  Universidad de Murcia. \\
 30100 Murcia, SPAIN\\
e-mail: \texttt{jmalmira@um.es}\\[2ex]



\end{document}